\numberwithin{equation}{section}
\newtheorem{theorem}{Theorem}
\def\de{{\rm d}}
\title{Adaptive LASSO-type estimation for ergodic diffusion processes}
\author{A. De Gregorio\\
Dipartimento di Statistica, Probabilit\`a e Statistiche Applicate,\\
P.le Aldo Moro 5, 00185- Rome, Italy \\
alessandro.degregorio@uniroma1.it
 \and
 S.M. Iacus\\
 Dipartimento di Scienze Economiche, Aziendali e Statistiche,\\
 Via Conservatorio 22, 20122 - Milan, Italy \\
  stefano.iacus@unimi.it
 }
\begin{document}
\maketitle

\begin{abstract}
The LASSO is a widely used statistical methodology for simultaneous estimation and variable selection. In the last years, many authors analyzed this technique from a theoretical and applied point of view. We introduce and study the adaptive LASSO problem for discretely observed ergodic diffusion processes. We prove oracle properties also deriving the asymptotic distribution of the LASSO estimator. Our theoretical framework is based on the random field approach and it applied to more general families of regular statistical experiments in the sense of  Ibragimov-Hasminskii (1981). Furthermore, we perform a simulation and real data analysis to provide some evidence on the applicability of this method.  \\

{\it Key words}: discretely observed diffusion processes, model selection, oracle properties, random fields, stochastic differential equations.
\end{abstract}

\section{Introduction}

The least absolute shrinkage and selection operator (LASSO) is a useful and well studied approach to the problem of model selection and its major advantage is the simultaneous execution of both parameter estimation and variable selection (see Tibshirani, 1996; Knight and Fu, 2000, Efron {\it et al.}, 2004). 
This is realized by the fact that the dimension of the parameter space does not change (while it does with the information criteria approach, e.g. in AIC, BIC, etc), because the LASSO method only sets some parameters to zero to eliminate them from the model. The LASSO method usually consists in the minimization of an $L^2$ norm under  $L^1$ norm constraints on the parameters. Thus it usually implies least squares or maximum likelihood approach plus constraints. The important property stating that
 the correct parameters are set to zero by LASSO method under the true data generating model, is called oracle property (Fan and Li, 2001). As shown by Zou (2006), since the classical LASSO estimator uses the same amount of shrinkage for each parameters, the resulting model selection could be inconsistent. To overcome this drawback, it is possible to consider an adaptive amount of shrinkage for each parameters (Zou, 2006).

 Originally, the LASSO procedure was introduced for linear regression problems, but, in the recent years, this approach has been applied to time series analysis by several authors mainly in the case of autoregressive models. For example, just to mention a few, Wang {\it et al.} (2007) consider the problem of shrinkage estimation of regressive and autoregressive coefficients, while Nardi and Rinaldo (2008) consider penalized order selection in an AR($p$) model. The VAR case was considered in Hsu {\it et al.} (2007).
Very recently Caner (2009) studied the LASSO method for general GMM estimator also in the case of time series and Knight (2008) extended the LASSO approach to nearly singular designs.

In this paper we consider the LASSO approach for discretely observed diffusion processes. In this case, the likelihood function is not usually known in closed form, moreover most models used in application are not necessarily linear. In this paper, instead of working on a single approximation of the likelihood, we study the problem in terms of random fields (see Yoshida, 2005) which encompasses all widely used methods in the literature of inference for discretely sampled diffusion processes.
Although we do not explicitly state the results in this form, the proofs in this paper, based on the properties of random fields, are immediately extensible to regular statistical experiments in the sense of Ibragimov-Hasmkinskii (1981), i.e. they apply to i.i.d. as well as regressive and autoregressive models.

For diffusion processes, the LASSO method requires some additional care because the rate of convergence of the parameters in the drift and the diffusion coefficient are different. We point out that, the usual model selection strategy based on AIC (see Uchida and Yoshida, 2005) usually depends on the properties of the estimators but also  on the method used to approximate the likelihood. Indeed, AIC requires the calculation of the likelihood (see Iacus, 2008). On the contrary,  the present LASSO approach depends solely on the properties of the estimator and so the problem of likelihood approximation is not particularly compelling.

It is worth to mention that, model selection for continuous time diffusion processes was considered earlier in Uchida and Yoshida (2001) by means of information criteria.

The paper is organized as follows. Section \ref{sec1} introduced the model and the regularity assumptions and states the problem of LASSO estimation for discretely sampled diffusion processes.
Section \ref{sec2} proves consistency and oracle properties of the LASSO estimator. 
Section \ref{sec3} contains a Monte Carlo analysis and one application to real financial data.
Proofs are collected in Section \ref{sec4}. Tables and figures at the end of the manuscript.

\section{The LASSO problem for diffusion models}\label{sec1}

In the first part of this Section, we introduce the model on which makes inference and some basic notations. Let $X_t,t>0,$ be a $d$-dimensional diffusion process solution of the following stochastic differential equation
\begin{equation}\label{eq:sde}
\de X_t = b(\alpha, X_t) \de t + \sigma(\beta,X_t)  \de W_t
\end{equation}
 where $\alpha=(\alpha_1,...,\alpha_{p})\in\Theta_p\subset \mathbb{R}^p$, $p\geq1$, $\beta=(\beta_1,...,\beta_q)\in\Theta_q\subset \mathbb{R}^q$, $q\geq1$, $b:\Theta_p\times\mathbb{R}^d\to \mathbb{R}^d$, $\sigma:\Theta_q\times \mathbb{R}^d\to \mathbb{R}^d\times \mathbb{R}^d$ and $W_t$ is a standard Brownian motion in $\mathbb{R}^d$. We assume that the functions $b$ and $\sigma$ are known up to  the parameters $\alpha$ and $\beta$. We denote by $\theta=(\alpha,\beta)\in\Theta_p\times \Theta_q=\Theta$ the parametric vector and with $\theta_0=(\alpha_0,\beta_0)$ its unknown true value. For a matrix $A$, we denote by $A^{\otimes 2}=AA'$ and by $A^{-1}$ the inverse of $A$. Let $\Sigma(\beta,x)=\sigma(\beta,x)^{\otimes 2}$. The sample path of $X_t$ is observed only at $n+1$ equidistant discrete times $t_i$, such that $t_i-t_{i-1}=\Delta_n<\infty$ for $1\leq i\leq n $ (with $t_0=0$ and $t_{n+1}=t$). We denote by ${\bf X}_n=\{X_{t_i}\}_{0\leq i\leq n}$ our random sample with values in $\mathbb{R}^{n\times d}$. 
 
 The asymptotic scheme adopted in this paper is the following: $n\Delta_n\to \infty$, $\Delta_n\to 0$ and $n\Delta_n^2\to 0$ as $n\to \infty$. This asymptotic framework is called rapidly increasing design and the condition $n\Delta_n^2\to 0$ means that $\Delta_n$ shrinks to zero slowly.
  We need  some assumptions on the regularity of the process:
\begin{itemize}
\item[$\mathcal A_1.$]  There exists a constant $C$ such that
$$|b(\alpha_0,x)-b(\alpha_0,y)|+|\sigma(\beta_0,x)-\sigma(\beta_0,y)|\leq C|x-y|.$$

\item[$\mathcal A_2.$] $\inf_{\beta,x}\mathrm{det}(\Sigma(\beta,x))>0$.

\item[$\mathcal A_3.$] The process $X$ is ergodic for every $\theta$ with
invariant probability measure $\mu_\theta$. 

\item[$\mathcal A_4.$]  For all $m\geq 0$ and for all $\theta$, $\sup_t
E|X_t|^m<\infty$.

\item[$\mathcal A_5.$] For every $\theta$, the coefficients $b(\alpha,x)$ and
$\sigma(\beta,x)$ are five times differentiable with respect to $x$ and
the derivatives are bounded by a polynomial function in $x$, uniformly in
$\theta$. 

\item[$\mathcal A_6.$]  The coefficients $b(\alpha,x)$ and $\sigma(\beta,x)$
and all their partial derivatives respect to $x$ up to order 2 are
three times differentiable with respect to $\theta$ for all $x$ in the
state space. All derivatives with respect to $\theta$ are bounded by a polynomial
function in $x$, uniformly in $\theta$.
\item[$\mathcal A_7.$] If the coefficients $b(\alpha,x)=b(\alpha_0,x)$ and
$\sigma(\beta,x)=\sigma(\beta_0,x)$  for all
$x$  ($\mu_{\theta_0}$-almost surely), then $\alpha=\alpha_0$ and $\beta=\beta_0$.
\end{itemize}
Hereafter, we assume that the conditions $\mathcal{A}_1-\mathcal{A}_7 $ hold. Let
$\mathcal I(\theta)$ be the positive definite and invertible
Fisher information matrix at $\theta$  given by
$$\mathcal I(\theta)=\left(%
\begin{array}{cc}
 \Gamma_\alpha= [\mathcal I_b^{kj}(\alpha)]_{k,j=1,...,p} & 0 \\
  0 & \Gamma_\beta= [\mathcal I_\sigma^{kj}(\beta)]_{k,j=1,...,q} \\
\end{array}%
\right)$$ where
$$\mathcal I_b^{kj}(\alpha)=\int\frac{1}{\sigma^2(\beta,x)}\frac{\partial b(\alpha,x)}{\partial\alpha_k}\frac{\partial b(\alpha,x)}{\partial\alpha_j}
\mu_{\theta}(dx)\,,$$
$$\mathcal I_\sigma^{kj}(\beta)=2\int\frac{1}{\sigma^2(\beta,x)}\frac{\partial \sigma(\beta,x)}{\partial\beta_k}\frac{\partial \sigma(\beta,x)}{\partial\beta_j}
\mu_{\theta}(dx)\,.$$ 
Moreover, we consider the matrix
$$\varphi(n)=\left(%
\begin{array}{cc}
  \frac{1}{n{\Delta_n}}{\bf I}_p& 0 \\
  0 &  \frac{1}{n}{\bf I}_q\\
\end{array}%
\right)$$
where ${\bf I}_p$ and ${\bf I}_q$ are respectively the indentity matrix of order $p$ and $q$.
 
In order to introduce the LASSO problem, we consider a random field $\mathbb{H}_n:\mathbb{R}^{n\times d}\times \Theta\to \mathbb{R}$ admitting the first and second derivatives with respect to $\theta$; we denote by $\dot{\mathbb{H}}_n({\bf X}_n, \theta)$ the vector of the first derivatives and by $\ddot{\mathbb{H}}_n({\bf X}_n, \theta)$ the Hessian matrix. Furthermore, we assume that the following conditions hold:
\begin{itemize}
\item[$\mathcal B_1.$] for each $\theta\in \Theta$, we have that
\begin{equation}
\label{eq31} \varphi(n)^{1/2}\ddot{\mathbb{H}}_n({\bf X}_n, \theta)
\varphi(n)^{1/2}\stackrel{p}{\to}\mathcal{I}(\theta)
\end{equation}
\item[$\mathcal B_2.$]
for each $\theta\in \Theta$, let $\tilde\theta_n:\mathbb{R}^{n\times d}\to \Theta$ be a consistent  estimator of $\theta$ given by
$$
\tilde{\theta}_n=\arg\min_\theta \mathbb{H}_n({\bf X}_n,\theta)
$$
such that
\begin{equation}
\varphi(n)^{-1/2}(\tilde\theta_n-\theta)\stackrel{d}{\to}N(0,\mathcal{I}(\theta)^{-1})
\end{equation}
\end{itemize}
An example of random field (contrast function) satisfying the assumptions $\mathcal{B}_1-\mathcal{B}_2$ is given by the quasi-likelihood function $\mathbb{H}_n({\bf X}_n,\theta)=l_n({\bf X}_n,\theta)$ obtained by means the Euler approximation  (see Kessler, 1997, Yoshida, 2005), that is
\begin{eqnarray}\label{qlik}
l_n({\bf X}_n,\theta)
=\frac12\sum_{i=1}^n\left\{\log\text{det}(\Sigma_{i-1}(\beta))
+\frac{1}{\Delta_n}
\Sigma_{i-1}^{-1}(\beta)[\Delta X_i-\Delta_n b_{i-1}(\alpha)]^{\otimes 2}\right\}
\end{eqnarray}
where $\Delta X_i=X_{t_i}-X_{t_{i-1}}$, $\Sigma_i(\beta)=\Sigma(\beta,X_{t_i})$ and $b_i(\alpha)=b(\alpha,X_{t_i})$.  Then the unpenalized estimator 

$$\tilde\theta_n=\arg\min_\theta l_n({\bf X}_n,\theta)$$
satisfies the assumption $\mathcal{B}_2$. For other examples, the reader can consult Bibby amd Sorensen, (1995), Kessler and Sorensen (1999), Nicolau (2002) and A\"\i t-Sahalia (2008).

The classical adaptive LASSO objective function, in this case, should be given by
\begin{equation}\label{eq:stlasso}
\mathbb{H}_n({\bf X}_n,\theta)+\sum_{j=1}^p\lambda_{n,j}|\alpha_j| +\sum_{k=1}^q\gamma_{n,k}|\beta_k|
\end{equation}
where $\lambda_{n,j}$ and $\gamma_{n,k}$ assume real positive values representing an adaptive amount of the shrinkage for  each elements of $\alpha$ and $\beta$.
Nevertheless, following the same approach of Wang and Leng (2007), we observe that by means of a Taylor expansion of $\mathbb{H}_n({\bf X}_n,\theta)$ at $\tilde{\theta}_n$, one has immediately that
\begin{eqnarray*}
\mathbb{H}_n({\bf X}_n,\theta)&=&\mathbb{H}_n({\bf X}_n,\tilde{\theta}_n)+\dot{\mathbb{H}}_n({\bf X}_n,\tilde{\theta})(\theta-\tilde{\theta}_n)'+\frac12(\theta-\tilde{\theta}_n)\ddot{\mathbb{H}}_n({\bf X}_n, \tilde\theta_n)(\theta-\tilde{\theta}_n)'+o_p(1)\\
&=&\mathbb{H}_n({\bf X}_n,\tilde{\theta}_n)+\frac12(\theta-\tilde{\theta}_n)\ddot{\mathbb{H}}_n({\bf X}_n, \tilde\theta_n)(\theta-\tilde{\theta}_n)'+o_p(1)
\end{eqnarray*}
Therefore, we use the following objective function
\begin{equation}\label{eq:lassotype}
\mathcal{F}(\theta)=(\theta-\tilde{\theta}_n)\ddot{\mathbb{H}}_n({\bf X}_n, \tilde\theta_n)(\theta-\tilde{\theta}_n)'+\sum_{j=1}^p\lambda_{n,j}|\alpha_j| +\sum_{k=1}^q\gamma_{n,k}|\beta_k|
\end{equation}
instead of \eqref{eq:stlasso}, and the LASSO-type estimator $\hat \theta_n:\mathbb{R}^{n\times d}\to \Theta$ is defined as
\begin{equation}
\hat{\theta}_n=(\hat\alpha_n,\hat\beta_n)=\arg\min_\theta\mathcal{F}(\theta).
\end{equation}
 The function $\mathcal{F}(\theta)$ is a penalized quadratic form and it has the advantage to provide an unified theoretical framework. Indeed, the objective function \eqref{eq:stlasso} allows us to perform correctly the LASSO procedure only if $\mathbb{H}_n$ is strictly convex and this fact restricts the choice of the possible contrast functions for the model \eqref{eq:sde}. Then, the function \eqref{eq:lassotype} overcomes this criticality. We also point out that $\mathcal{F}(\theta)$ has two constraints, because the drift and diffusion  parameters $\alpha_j$ and $\beta_k$ are well separated with different rates of convergence.

\section{Oracle properties}\label{sec2}
As observed by Fan and Li (2001), a good procedure should have the oracle properties, that is:
\begin{itemize}
\item identifies the right subset model;
\item has the optimal estimation rate and converge to a Gaussian random variable $N(0,\Sigma)$ where $\Sigma$ is the covariance matrix of the true subset model.
\end{itemize} 
The aim of this Section is to prove that LASSO-type estimator $\hat\theta_n$ has a good behavior in the oracle sense.

As shown by Zou (2006) the classical LASSO estimation cannot be as efficient as the oracle and the selection  results could be inconsistent, whereas its adaptive version has the oracle properties. Without loss of generality, we assume that the true model, indicated by $\theta_0=(\alpha_0,\beta_0)$, has parameters $\alpha_{0j}$ and $\beta_{0k}$ equal to zero for $p_0<j\leq p$ and $q_0<k\leq q$, while $\alpha_{0j}\neq0$ and $\beta_{0k}\neq0$ for $1\leq j\leq p_0$ and $1\leq k\leq q_0$. To study the asymptotic properties of the LASSO-type estimator $\hat\theta_n$,
we consider the following conditions:
\begin{itemize}
\item[$\mathcal{C}_1$.]  $\frac{\mu_n}{\sqrt{n \Delta_n}}\to 0$ and $\frac{\nu_n}{\sqrt{n}}\to 0$ where $\mu_n=\max\{\lambda_{n,j},1\leq j\leq p_0\}$ and $\nu_n=\max\{\gamma_{n,k},1\leq k\leq q_0\}$
\item[$\mathcal{C}_2$.] $\frac{\kappa_n}{\sqrt{n\Delta_n}}\to \infty$ and $\frac{\omega_n}{\sqrt{n}}\to \infty$ where $\kappa_n=\min\{\lambda_{n,j},j> p_0\}$ and $\omega_n=\min\{\gamma_{n,k},k> q_0\}$
\end{itemize}

The assumption $\mathcal{C}_1$ says us that the maximal tuning coefficient for the parameter $\alpha_j$ and $\beta_k$, with $1 \leq j\leq p_0$ and $1\leq k\leq q_0$, tends to zero faster than $(n\Delta_n)^{-\frac12}$ and $n^{-\frac12}$ respectively and then implies that $\sqrt{n\Delta_n}\mu_n\to0$, $\sqrt{n}\nu_n\to0$. Analogously, we observe that $\mathcal{C}_2$ means that that the minimal tuning coefficient for the parameter $\alpha_j$ and $\beta_k$, with $j> p_0$ and $ k> q_0$, tends to infinite faster than $\sqrt{n\Delta_n}$ and $\sqrt{n}$.
 
\begin{theorem} 
Under the conditions $\mathcal{B}_1,\,\mathcal{B}_2$ and $\mathcal{C}_1$, one has that 
$$\hat{\theta}_n\stackrel{p}{\to} \theta_0$$
\label{th1}
\end{theorem}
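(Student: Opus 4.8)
The plan is to exploit the minimization property of $\hat\theta_n$ through an elementary comparison inequality in which the competitor is the true value $\theta_0$ itself. Writing $Q(\theta)=(\theta-\tilde\theta_n)\ddot{\mathbb{H}}_n({\bf X}_n,\tilde\theta_n)(\theta-\tilde\theta_n)'$ for the quadratic part of $\mathcal{F}$, the inequality $\mathcal{F}(\hat\theta_n)\le\mathcal{F}(\theta_0)$ together with nonnegativity of the penalty at $\hat\theta_n$ gives
\begin{equation*}
Q(\hat\theta_n)\le Q(\theta_0)+\sum_{j=1}^{p_0}\lambda_{n,j}|\alpha_{0j}|+\sum_{k=1}^{q_0}\gamma_{n,k}|\beta_{0k}|,
\end{equation*}
where I have used that $\alpha_{0j}=0$ for $j>p_0$ and $\beta_{0k}=0$ for $k>q_0$, so that the penalty evaluated at $\theta_0$ only involves the nonzero coordinates. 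This is precisely why I compare against $\theta_0$ and not against $\tilde\theta_n$: a comparison with $\tilde\theta_n$ would annihilate the quadratic term but would leave penalty contributions from the coordinates $j>p_0$, $k>q_0$, whose tuning constants are not controlled by $\mathcal{C}_1$.

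First I would bound the right-hand side. Each penalty term with $j\le p_0$ is at most $\mu_n|\alpha_{0j}|$ and each with $k\le q_0$ at most $\nu_n|\beta_{0k}|$; since $\mathcal{C}_1$ forces $\mu_n\to0$ and $\nu_n\to0$, the penalty at $\theta_0$ is $o(1)$. For the quadratic term I would factor through the rate matrix: with $z_n=\varphi(n)^{-1/2}(\tilde\theta_n-\theta_0)$, assumption $\mathcal{B}_2$ gives $z_n=O_p(1)$, and
\begin{equation*}
Q(\theta_0)=z_n'\big(\varphi(n)^{1/2}\ddot{\mathbb{H}}_n({\bf X}_n,\tilde\theta_n)\varphi(n)^{1/2}\big)z_n=O_p(1)
\end{equation*}
by $\mathcal{B}_1$, the bracketed matrix converging in probability to the positive definite $\mathcal{I}(\theta_0)$. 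Hence $Q(\hat\theta_n)=O_p(1)$.

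It then remains to turn this into control of $\hat\theta_n-\tilde\theta_n$. Setting $u_n=\varphi(n)^{-1/2}(\hat\theta_n-\tilde\theta_n)$, the same factorization yields $Q(\hat\theta_n)=u_n'\big(\varphi(n)^{1/2}\ddot{\mathbb{H}}_n\varphi(n)^{1/2}\big)u_n$; since $\mathcal{I}(\theta_0)$ is positive definite, the bracketed matrix dominates $c\,{\bf I}$ for some $c>0$ with probability tending to one, so $c\|u_n\|^2\le Q(\hat\theta_n)=O_p(1)$ and therefore $\|u_n\|=O_p(1)$. Because $\varphi(n)^{1/2}=\mathrm{diag}\big((n\Delta_n)^{-1/2}{\bf I}_p,\,n^{-1/2}{\bf I}_q\big)\to0$ under $n\Delta_n\to\infty$, this gives $\hat\theta_n-\tilde\theta_n=\varphi(n)^{1/2}u_n=o_p(1)$, and combining with the consistency of $\tilde\theta_n$ from $\mathcal{B}_2$ and the triangle inequality yields $\hat\theta_n\stackrel{p}{\to}\theta_0$.

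The step I expect to be most delicate is the legitimacy of invoking $\mathcal{B}_1$ at the random argument $\tilde\theta_n$, since that assumption is stated only for each fixed $\theta$. To replace $\varphi(n)^{1/2}\ddot{\mathbb{H}}_n({\bf X}_n,\tilde\theta_n)\varphi(n)^{1/2}$ by its limit $\mathcal{I}(\theta_0)$ I would need a locally uniform version of the convergence in $\mathcal{B}_1$, or equivalently continuity of $\theta\mapsto\mathcal{I}(\theta)$ together with a stochastic equicontinuity argument for the rescaled Hessian on a shrinking neighbourhood of $\theta_0$; in the random field framework this is supplied by the moment and smoothness bounds of assumptions $\mathcal{A}_5$--$\mathcal{A}_6$. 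The remaining ingredients---the positive definite lower bound and the vanishing of $\varphi(n)^{1/2}$---are routine.
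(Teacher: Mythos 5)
Your route is genuinely different from the paper's. The paper runs the Fan--Li localization argument: it shows that, with probability tending to one, $\mathcal{F}(\theta_0+\varphi(n)^{1/2}z)>\mathcal{F}(\theta_0)$ uniformly over spheres $|z|=C$ for $C$ large, deduces a local minimizer at distance $O_p(\varphi(n)^{1/2})$ from $\theta_0$, and then uses convexity of $\mathcal{F}$ to identify it with $\hat\theta_n$. You instead use the global comparison $\mathcal{F}(\hat\theta_n)\le\mathcal{F}(\theta_0)$. The difference is not cosmetic, and it is exactly where your proof has a gap: the step ``$\mathcal{C}_1$ forces $\mu_n\to0$ and $\nu_n\to0$.'' As displayed, $\mathcal{C}_1$ reads $\mu_n/\sqrt{n\Delta_n}\to0$ and $\nu_n/\sqrt{n}\to0$; since $n\Delta_n\to\infty$, this permits $\mu_n,\nu_n\to\infty$. (The prose after $\mathcal{C}_1$ asserts the stronger $\sqrt{n\Delta_n}\,\mu_n\to0$, $\sqrt{n}\,\nu_n\to0$ --- the paper is internally inconsistent here --- but the displayed condition is the stated hypothesis of the theorem, is the standard adaptive-LASSO normalization, and suffices for the paper's localized argument, in which the penalties are only ever evaluated at perturbations of size $\varphi(n)^{1/2}$, so the tuning constants appear only in the combinations $\mu_n/\sqrt{n\Delta_n}$, $\nu_n/\sqrt{n}$.) Under the displayed condition your bound degrades to $Q(\hat\theta_n)\le O_p(1)+O(\mu_n)+O(\nu_n)$ with $\nu_n$ possibly of order nearly $\sqrt{n}$, and then for the drift block
\[
|\hat\alpha_n-\tilde\alpha_n|^2\;\le\;\frac{\|u_n\|^2}{n\Delta_n}\;=\;O_p\!\left(\frac{\nu_n}{n\Delta_n}\right),
\]
which need not vanish: the sampling scheme $n\Delta_n^2\to0$ means $n\Delta_n=o(\sqrt{n})$, so for instance $\nu_n=n^{3/4}\Delta_n^{1/2}$ satisfies $\mathcal{C}_1$ while $\nu_n/(n\Delta_n)=(n\Delta_n^2)^{-1/4}\to\infty$. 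In words, the (possibly huge) diffusion penalties evaluated at $\theta_0$ contaminate your single scalar bound on $Q(\hat\theta_n)$, and the drift curvature $n\Delta_n$ is too weak to absorb them.

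The gap is repairable while keeping your global approach (which has the advantage of not needing the paper's local-to-global convexity step): do not discard the penalty at $\hat\theta_n$. From $\mathcal{F}(\hat\theta_n)\le\mathcal{F}(\theta_0)$,
\[
Q(\hat\theta_n)\;\le\;Q(\theta_0)+\sum_{j=1}^{p_0}\lambda_{n,j}\bigl(|\alpha_{0j}|-|\hat\alpha_{n,j}|\bigr)+\sum_{k=1}^{q_0}\gamma_{n,k}\bigl(|\beta_{0k}|-|\hat\beta_{n,k}|\bigr)\;\le\;Q(\theta_0)+p_0\mu_n|\hat\alpha_n-\alpha_0|+q_0\nu_n|\hat\beta_n-\beta_0|,
\]
where the terms with $j>p_0$, $k>q_0$ were dropped because they equal $-\lambda_{n,j}|\hat\alpha_{n,j}|\le0$ and $-\gamma_{n,k}|\hat\beta_{n,k}|\le0$. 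Now bound $|\hat\alpha_n-\alpha_0|\le(n\Delta_n)^{-1/2}\bigl(\|u_n\|+O_p(1)\bigr)$ and $|\hat\beta_n-\beta_0|\le n^{-1/2}\bigl(\|u_n\|+O_p(1)\bigr)$ by the triangle inequality and $\mathcal{B}_2$; under the displayed $\mathcal{C}_1$ the right-hand side becomes $O_p(1)+o_p(1)\|u_n\|$, and your lower bound $Q(\hat\theta_n)\ge c\|u_n\|^2$ then gives $\|u_n\|=O_p(1)$ by the quadratic-beats-linear argument. The rest of your proof is unchanged and in fact delivers the rate $\hat\theta_n-\tilde\theta_n=O_p(\varphi(n)^{1/2})$, matching the localization the paper obtains. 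Finally, the delicacy you flag about invoking $\mathcal{B}_1$ at the random point $\tilde\theta_n$ is real but not specific to your argument: the paper's proof uses $\varphi(n)^{1/2}\ddot{\mathbb{H}}_n({\bf X}_n,\tilde\theta_n)\varphi(n)^{1/2}\stackrel{p}{\to}\mathcal{I}(\theta_0)$ with no further justification, so both proofs implicitly require $\mathcal{B}_1$ to hold locally uniformly near $\theta_0$ (or continuity of the rescaled Hessian in $\theta$), exactly as you describe.
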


For the sake of simplicity, we denote by $\theta^*=(\alpha^*,\beta^*)$  the vector corresponding to the nonzero parameters, where $\alpha^*=(\alpha_1,...,\alpha_{p_0})$ and $\beta^*=(\beta_1,...,\beta_{q_0+1})$, while $\theta^\circ=(\alpha^\circ,\beta^\circ)'$ is the vector corresponding to the zero parameters where $\alpha^\circ=(\alpha_{p_0+1},...,\alpha_p)$ and $\beta^\circ=(\beta_{q_0+1},...,\beta_q)$. Therefore, $\theta_0=(\alpha_0,\beta_0)=(\alpha_0^*,\alpha_0^\circ,\beta_0^*,\beta_0^\circ)$ and $\hat\theta_n=(\hat\alpha_n^*,\hat\alpha_n^\circ,\hat\beta_n^*,\hat\beta_n^\circ)$.

\begin{theorem}
Under the conditions $\mathcal{B}_1,\,\mathcal{B}_2$ and $\mathcal{C}_2$,
we have that
\begin{align}
&P(\hat\alpha_n^\circ=0)\to 1\qquad \text{ and } \qquad  P(\hat\beta_n^\circ=0)\to 1.
\end{align}
\label{th2}
\end{theorem}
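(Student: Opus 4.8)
The plan is to prove both sparsity statements by a Karush--Kuhn--Tucker (subgradient) contradiction argument applied to the penalized quadratic form $\mathcal{F}$, after first pinning down the rate of the rescaled fluctuations of $\hat\theta_n$. I treat the drift block $\hat\alpha_n^\circ$ in detail; the diffusion block $\hat\beta_n^\circ$ is handled verbatim with $n\Delta_n$ replaced by $n$ and $\kappa_n$ by $\omega_n$.

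First I would record the rate as a preliminary. Writing $M_n=\varphi(n)^{1/2}\ddot{\mathbb{H}}_n({\bf X}_n,\tilde\theta_n)\varphi(n)^{1/2}$, assumption $\mathcal{B}_1$ together with $\tilde\theta_n\stackrel{p}{\to}\theta_0$ gives $M_n\stackrel{p}{\to}\mathcal{I}(\theta_0)$, which is positive definite, so with probability tending to one $\lambda_{\min}(M_n)$ is bounded away from $0$ and $\|M_n\|=O_p(1)$. Setting $\hat u_n=(\hat\theta_n-\theta_0)\varphi(n)^{-1/2}$ and $w_n=(\tilde\theta_n-\theta_0)\varphi(n)^{-1/2}$, the latter is $O_p(1)$ by $\mathcal{B}_2$, and the quadratic part of $\mathcal{F}$ equals $(\hat u_n-w_n)M_n(\hat u_n-w_n)'$. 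Comparing $\mathcal{F}(\hat\theta_n)\le\mathcal{F}(\theta_0)$ and bounding the penalty increments from below by the reverse triangle inequality (the zero--component increments are nonnegative, while the nonzero--component increments are at worst of order $-\mu_n(n\Delta_n)^{-1/2}\|\hat u_n\|$ and $-\nu_n n^{-1/2}\|\hat u_n\|$, hence $o_p(1)\|\hat u_n\|$ by $\mathcal{C}_1$) produces a quadratic--in--$\|\hat u_n\|$ inequality that forces $\hat u_n=O_p(1)$. Equivalently, since $\ddot{\mathbb{H}}_n=\varphi(n)^{-1/2}M_n\varphi(n)^{-1/2}$, one obtains
$$\ddot{\mathbb{H}}_n({\bf X}_n,\tilde\theta_n)(\hat\theta_n-\tilde\theta_n)'=\varphi(n)^{-1/2}M_n(\hat u_n-w_n)',\qquad M_n(\hat u_n-w_n)'=O_p(1).$$

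Next comes the main step. Fix $j>p_0$ and suppose $\hat\alpha_{n,j}\ne0$. Since $\mathcal{F}$ is differentiable in $\alpha_j$ away from zero, the first--order condition reads $2[\ddot{\mathbb{H}}_n(\hat\theta_n-\tilde\theta_n)']_{\alpha_j}+\lambda_{n,j}\,\mathrm{sgn}(\hat\alpha_{n,j})=0$, so that $\lambda_{n,j}=2\,|[\ddot{\mathbb{H}}_n(\hat\theta_n-\tilde\theta_n)']_{\alpha_j}|$. Because the $\alpha_j$--row of $\varphi(n)^{-1/2}$ scales by $\sqrt{n\Delta_n}$, the preliminary step gives $|[\ddot{\mathbb{H}}_n(\hat\theta_n-\tilde\theta_n)']_{\alpha_j}|=\sqrt{n\Delta_n}\,|[M_n(\hat u_n-w_n)']_{\alpha_j}|=O_p(\sqrt{n\Delta_n})$. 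Consequently, using $\lambda_{n,j}\ge\kappa_n$ and $\mathcal{C}_2$,
$$\frac{2\,|[\ddot{\mathbb{H}}_n(\hat\theta_n-\tilde\theta_n)']_{\alpha_j}|}{\lambda_{n,j}}\le\frac{O_p(\sqrt{n\Delta_n})}{\kappa_n}=\frac{\sqrt{n\Delta_n}}{\kappa_n}\,O_p(1)\stackrel{p}{\to}0,$$
so the event $\{\hat\alpha_{n,j}\ne0\}$, on which this ratio must equal $1$, has probability tending to $0$. Intersecting over the finitely many indices $p_0<j\le p$ yields $P(\hat\alpha_n^\circ=0)\to1$, and the identical argument with $n$, $\omega_n$, $\gamma_{n,k}$ in place of $n\Delta_n$, $\kappa_n$, $\lambda_{n,j}$ yields $P(\hat\beta_n^\circ=0)\to1$.

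I expect the rate step to be the main obstacle. Assumption $\mathcal{B}_1$ is stated pointwise in $\theta$, so transferring it to the data--dependent point $\tilde\theta_n$ to obtain $M_n\stackrel{p}{\to}\mathcal{I}(\theta_0)$ requires a local--uniformity or continuity argument for the rescaled Hessian, and the lower bound on the penalty increments must be arranged so that the nonzero--component terms are genuinely negligible against the quadratic term — this is precisely where the condition $\mathcal{C}_1$ (governing $\mu_n,\nu_n$) enters, consistently with its use in establishing consistency. Once $\hat u_n=O_p(1)$ is secured, the subgradient contradiction is routine; the only point needing care is keeping the two scalings $\sqrt{n\Delta_n}$ and $\sqrt{n}$ separate, matched respectively by $\kappa_n$ and $\omega_n$ in $\mathcal{C}_2$.
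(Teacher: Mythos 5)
Your proof is correct and follows essentially the same route as the paper: a first-order (KKT) condition at the minimizer for each zero component, with the smooth part of the derivative shown to be $O_p(\sqrt{n\Delta_n})$ (resp.\ $O_p(\sqrt{n})$) and hence dominated by the penalty term $\kappa_n$ (resp.\ $\omega_n$) under $\mathcal{C}_2$, forcing $P(\hat\alpha_{n,j}=0)\to 1$ and $P(\hat\beta_{n,k}=0)\to 1$. The only difference is one of rigor, not of method: you establish the $O_p(1)$ bound on the rescaled fluctuations explicitly (noting that this step uses $\mathcal{C}_1$ and a continuity argument for $\mathcal{B}_1$ at $\tilde\theta_n$), whereas the paper obtains the same bound by invoking its Theorem \ref{th1} and asserting the first term is $O_p(1)$.
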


From Theorem \ref{th1}, we can conclude that the estimator $\hat\theta_n$ is consistent. Furthemore, Theorem \ref{th2} says us that all the estimates of the zero parameters are correctly set equal to zero with probability tending to 1. In other words, the model selection procedure is consistent and the true subset model is correctly indentified with probability tending to 1.

To complete our program, we derive the asymptotic distribution of $\hat\theta_n^*$. Hence, we indicate by
$\mathcal I_0(\theta_0^*)$ the $(p_0+q_0)\times(p_0+ q_0)$ submatrix of $\mathcal{I}(\theta)$ at point $\theta_0^*$, that is
$$\mathcal I_0(\theta_0^*)=\left(%
\begin{array}{cc}\Gamma_\alpha^{**}=
  [\mathcal I_b^{kj}(\alpha_0^*)]_{k,j=1,...,p_0} & 0 \\
  0 & \Gamma_\beta^{**}=[\mathcal I_\sigma^{kj}(\beta_0^*)]_{k,j=1,...,q_0} \\
\end{array}%
\right)$$
and introduce the following rate of convergence matrix
$$\varphi_0(n)=\left(%
\begin{array}{cc}
  \frac{1}{n{\Delta_n}}{\bf I}_{p_0}& 0 \\
  0 &  \frac{1}{n}{\bf I}_{q_0}\\
\end{array}%
\right)$$

The next result establishes that the estimator $\hat\theta_n^*$ is efficient as well as the oracle estimator.

\begin{theorem}[Oracle property]
Under the conditions $\mathcal{B}_1,\,\mathcal{B}_2$, $\mathcal{C}_1$ and $\mathcal{C}_2$, we have that
\begin{equation}\label{oracle}
\varphi_0(n)^{-\frac12}(\hat\theta_n^*-\theta_0^*)\stackrel{d}{\to} N(0,\mathcal{I}_0^{-1}(\theta_0^*))
\end{equation}
\label{th3}
\end{theorem}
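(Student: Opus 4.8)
The plan is to adapt the convexity argument of Knight and Fu (2000) to the present rescaled setting, exploiting the two different rates encoded in $\varphi(n)$. First I would localize the objective by the change of variable $\theta=\theta_0+\varphi(n)^{1/2}u$, $u\in\mathbb{R}^{p+q}$, and introduce the recentered random field
$$
V_n(u)=\mathcal{F}(\theta_0+\varphi(n)^{1/2}u)-\mathcal{F}(\theta_0),
$$
whose minimizer is exactly $\hat u_n=\varphi(n)^{-1/2}(\hat\theta_n-\theta_0)$. Since the drift and diffusion blocks of $\varphi(n)$ and of $\varphi_0(n)$ carry the same rates $\sqrt{n\Delta_n}$ and $\sqrt{n}$, the subvector of $\hat u_n$ associated with the nonzero parameters coincides with $\varphi_0(n)^{-1/2}(\hat\theta_n^*-\theta_0^*)$, which is precisely the quantity appearing in \eqref{oracle}.

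Next I would compute the finite-dimensional limit of $V_n$. Writing $w_n=\varphi(n)^{-1/2}(\tilde\theta_n-\theta_0)$ and $M_n=\varphi(n)^{1/2}\ddot{\mathbb{H}}_n({\bf X}_n,\tilde\theta_n)\varphi(n)^{1/2}$, the quadratic part of $V_n$ equals $(u-w_n)'M_n(u-w_n)-w_n'M_n w_n$. By $\mathcal{B}_2$ one has $w_n\stackrel{d}{\to}w\sim N(0,\mathcal{I}(\theta_0)^{-1})$, while $\mathcal{B}_1$ together with the consistency $\tilde\theta_n\stackrel{p}{\to}\theta_0$ (and the local uniformity granted by $\mathcal{A}_5$--$\mathcal{A}_6$) yields $M_n\stackrel{p}{\to}\mathcal{I}(\theta_0)$; Slutsky then gives the joint convergence $(M_n,w_n)\stackrel{d}{\to}(\mathcal{I}(\theta_0),w)$. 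For the penalty, a first-order expansion of $|\cdot|$ shows that each term attached to a nonzero parameter equals $\lambda_{n,j}\,\mathrm{sgn}(\alpha_{0j})\,u_j/\sqrt{n\Delta_n}$ (resp. the analogous $\beta$-term) for $n$ large, hence is $o(1)$ by $\mathcal{C}_1$, whereas each term attached to a zero parameter equals $(\lambda_{n,j}/\sqrt{n\Delta_n})|u_j|$ or $(\gamma_{n,k}/\sqrt{n})|u_k|$ and, by $\mathcal{C}_2$, diverges to $+\infty$ unless the corresponding component of $u$ is zero.

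Combining the two parts, $V_n$ converges in finite-dimensional distribution to the convex random function that equals $(u-w)'\mathcal{I}(\theta_0)(u-w)-w'\mathcal{I}(\theta_0)w$ on the subspace $\{u^\circ=0\}$ and $+\infty$ elsewhere (here $u^\circ$ denotes the components corresponding to the zero parameters). Partition $\mathcal{I}(\theta_0)$ into blocks $\mathcal{I}_{**},\mathcal{I}_{*\circ},\mathcal{I}_{\circ *},\mathcal{I}_{\circ\circ}$ according to the nonzero/zero split, so that $\mathcal{I}_{**}=\mathcal{I}_0(\theta_0^*)$. Since $M_n$ is eventually positive definite by $\mathcal{B}_1$ and the penalty is convex, $V_n$ is almost surely convex for $n$ large, so I would invoke the epi-convergence/argmin theorem for convex processes (Geyer, 1994; Knight and Fu, 2000): finite-dimensional convergence of convex functions toward a limit with a unique minimizer forces convergence in distribution of the minimizers. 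Positive definiteness of $\mathcal{I}(\theta_0)$ guarantees that $\mathcal{I}_{**}$ is invertible and that $V$ has a unique minimizer, whence $\hat u_n\stackrel{d}{\to}\arg\min V$, whose $\circ$-components vanish, in agreement with Theorem \ref{th2}.

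It then remains to identify the law of the nonzero block $\hat u^*$ of $\arg\min V$. Minimizing the quadratic form over $\{u^\circ=0\}$ gives the first-order condition $\mathcal{I}_{**}(\hat u^*-w^*)-\mathcal{I}_{*\circ}w^\circ=0$, i.e. $\hat u^*=w^*+\mathcal{I}_{**}^{-1}\mathcal{I}_{*\circ}w^\circ$, a linear, hence Gaussian and zero-mean, function of $w\sim N(0,\mathcal{I}(\theta_0)^{-1})$. A direct computation using the Schur-complement identities for $\mathcal{I}(\theta_0)^{-1}$ makes the four covariance contributions collapse to $\mathrm{Cov}(\hat u^*)=\mathcal{I}_{**}^{-1}=\mathcal{I}_0^{-1}(\theta_0^*)$, which is exactly \eqref{oracle}. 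I expect the delicate points to be two: transferring the pointwise statement $\mathcal{B}_1$ from a fixed $\theta$ to the estimated point $\tilde\theta_n$, which needs a stochastic-equicontinuity argument on a shrinking neighbourhood of $\theta_0$, and justifying the convergence of the minimizers despite the $+\infty$-valued limit. The latter is precisely where the convexity of $V_n$ is indispensable, as it lets the epi-convergence machinery dispense with any ad hoc tightness verification.
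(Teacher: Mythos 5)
Your proof is correct, but it takes a genuinely different route from the paper's. The paper argues in the style of Wang and Leng (2007): it invokes Theorem~\ref{th2} to replace the full objective by the restricted quadratic $\mathcal{F}_0$ in which the zero components are set exactly to zero, writes the normal equations \eqref{eq:A4}--\eqref{eq:A4A} for the nonzero blocks, solves them to get the linear representation $\sqrt{n\Delta_n}(\hat\alpha_n^*-\alpha_0^*)=\sqrt{n\Delta_n}(\tilde\alpha_n^*-\alpha_0^*)+(\Gamma_\alpha^{**})^{-1}\Gamma_\alpha^{*\circ}\sqrt{n\Delta_n}\,\tilde\alpha_n^\circ+o_p(1)$ (the penalty term killed by $\mathcal{C}_1$), and then collapses the covariance via conditional-Gaussian/Schur-complement identities. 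Your epi-convergence argument (Geyer 1994; Knight and Fu 2000 --- essentially Zou's 2006 proof adapted to the two-rate matrix $\varphi(n)$) arrives at exactly the same limiting object, namely the minimizer $\hat u^*=w^*+\mathcal{I}_{**}^{-1}\mathcal{I}_{*\circ}w^\circ$ of the limit quadratic over $\{u^\circ=0\}$ with covariance collapsing to $\mathcal{I}_{**}^{-1}=\mathcal{I}_0^{-1}(\theta_0^*)$, but justifies it differently: convexity plus finite-dimensional convergence is transferred directly to convergence of the argmins. Your route buys several things: Theorem~\ref{th2} is not needed as an input (the degenerate limit of the $\circ$-block and the Gaussian limit of the $*$-block come out of a single argument), the two rates are handled by one uniform rescaling, and your Schur-complement computation is the clean version of a step the paper garbles (the paper writes $(\Gamma_\alpha^{*\circ})^{-1}$, the inverse of a non-square $p_0\times(p-p_0)$ matrix). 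The paper's route buys elementarity: normal equations and block inversion, with no appeal to an argmin theorem for extended-real-valued convex processes --- which, as you note, is the delicate external tool in your argument, since the limit is $+\infty$ off a subspace. Finally, both proofs share the same unaddressed gap: $\mathcal{B}_1$ is stated pointwise in $\theta$ while the Hessian is evaluated at the random point $\tilde\theta_n$; you flag this and sketch the fix via stochastic equicontinuity near $\theta_0$, whereas the paper passes over it silently.
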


Clearly, the theoretical and practical implications of our method rely to the specification of the tuning parameter $\lambda_{n,j}$ and $\gamma_{n,k}$. As observed in Wang and Leng (2007), these values could be obtained by means of some model selection criteria like generalized cross-validation, Akaike information criteria or Bayes information criteria. Unfortunately, this solution is computationally heavy and then impracticable. Therefore,
the tuning parameters should be chosen as is Zou (2006) in the following way
\begin{equation}
\label{eq:penalty}
\lambda_{n,j} = \lambda_0 |\tilde \alpha_{n,j}|^{-\delta_1}, \qquad
\gamma_{n,k} = \gamma_0 |\tilde \beta_{n,j}|^{-\delta_2}
\end{equation}
where $\tilde \alpha_{n,j}$ and  $\tilde \beta_{n,k}$ are the unpenalized estimator of $\alpha_j$ and $\beta_k$ respectively, $\delta_1, \delta_2>0$ and usually taken unitary.
The asymptotic results hold under the additional conditions
$$
\sqrt{n\Delta_n} \lambda_0\to 0, \quad (n\Delta_n)^\frac{1+\delta_1}{2}\lambda_0\to\infty, \quad\text{and}\quad
\sqrt{n}\gamma_0 \to 0, \quad n^\frac{1+\delta_2}{2}\gamma_0 \to\infty.
$$

\section{Performance of the LASSO method for small sample size}\label{sec3}
In this section we perform a small Monte Carlo analysis to check whether the LASSO method is able to select a specified model also in small samples. We also apply the method to a benchmark data set often used in the literature of model selection. The asymptotic framework of this paper is not completely realized in the next two applications, but nevertheless we test what happens outside the theoretical framework.

In both cases, we do not pretend to give extensive analysis of the method, because the previous theorems already prove the asymptotic validity of the LASSO approach for diffusion processes. Instead, we just want to show some evidence on simulated and real data to give the feeling of the applicability of the method.

\subsection{A simulation experiment}
We reproduce the experimental design in Uchida and Yoshida (2005). Therefore, we consider a diffusion process solution of the following stochastic differential equation
$$
\de X_t = -(X_t-10)\de t + 2 \sqrt{X_t}\de W_t, \quad X_0=10\,.
$$
We simulate 1000 trajectories of this process using the second Milstein scheme, i.e. the data are simulated according to
$$
\begin{aligned}
X_{t_{i+1}} =&\, \,X_{t_i} + \left(b - \frac12 \sigma \sigma_x \right) \Delta_n + \sigma Z \sqrt{\Delta_n} + \frac12 \sigma \sigma_x \Delta_n Z^2\\
&+ \Delta_n^\frac32 \left(\frac12 b \sigma_x + \frac12 b_x\sigma+\frac14 \sigma^2 \sigma_{xx}\right)Z
+ \Delta_n^2 \left(\frac12 b b_x + \frac14 b_{xx}\sigma^2\right)
\end{aligned}
$$
with $Z\sim N(0,1)$, $b_x$ and $b_{xx}$ (resp. $\sigma_x$ and $\sigma_{xx}$) are the first and second partial derivative in $x$ of the drift (resp. diffusion) coefficients (see, Milstein, 1978). This scheme has weak second-order convergence and guarantees good numerical stability.
Data are simulated at high frequency and resampled  at lower frequency $\Delta_n=0.1$ for a total of $n=1000$ observations. The simulations are done using the \texttt{sde} package  (see Iacus, 2008) for the \textsf{R} statistical environment.
So we estimate via LASSO the following five dimensional parametric model
$$
\de X_t = -\theta_1(X_t-\theta_2)\de t + (\theta_3+\theta_4 X_t)^{\theta_5}\de W_t
$$
and the true model is $(\theta_1=1, \theta_2=10, \theta_3=0, \theta_4=4, \theta_5=0.5)$. The LASSO estimator is obtained plugging in the objective function $\mathcal{F}$, the quasi-likelihood estimator and the Hessian matrix obtained by the function $\eqref{qlik}$ particularized for the present model $X_t$. 
For the penalization term we use $\lambda_0=\gamma_0 = 1$ in   \eqref{eq:penalty}.
\begin{center}
\fbox{\bf Figure \ref{fig1} about here}
\end{center}
Figure \ref{fig1} reports the density estimation of the estimates of the parameters $\theta_i$, $i=1, \ldots, 5$ against their theoretical true value. These distributions are obtained using the estimates obtained from the 1000 Monte Carlo replications. Figure \ref{fig1} indicates that all parameters are correctly estimated most of the times and, in particular, the parameter $\theta_3$ is often estimated as zero.

\subsection{An example of use in the problem of identification of the term structure of interest rates}
In this section we reanalyze the U.S. Interest Rates monthly data from 06/1964 to 12/1989 for a total of 307 observations. These data have been analyzed by many author including Nowman (1997), A\"\i t-Sahalia (1996), Yu and Phillips (2001) just to mention a few references. We do not pretend to give the definitive answer on the subject, but just to analyze the effect of the model selection via the LASSO in a real application.
The data used for this application were taken from the \textsf{R} package \texttt{Ecdat} by Croissant (2006).
The different authors all try to fit a version of the so called CKLS model (from Chan {\it et al.}, 1992) which is the solution $X_t$ of the following stochastic differential equation
$$
\de X_t = (\alpha + \beta X_t)\de t + \sigma X_t^\gamma \de W_t. 
$$
This model encompass several other models depending on the number of non-null parameters as Table \ref{tab1} shows. This makes clear why the model selection on the CKLS model is quite appealing.
\begin{center}
\fbox{\bf Table \ref{tab1} about here}
\end{center}
Our application of the LASSO method is reported in Table \ref{tab2} along with the results from Yu and Phillips (2001) just for comparison. 
\begin{center}
\fbox{\bf Table \ref{tab2} about here}
\end{center}
Although we have proven that asymptotically the LASSO provides consistent estimates with the oracle properties, for finite sample size this is not always the case as mentioned by several authors.
In this application,  we estimate the parameters using quasi-likelihood method (QMLE in the table) in the first stage, then set the penalties as in  \eqref{eq:penalty} and run the LASSO optimization.
We estimate the CKLS parameters via the LASSO using mild penalties (i.e. $\lambda_0=\gamma_0=1$ in \eqref{eq:penalty}) and strong penalties (i.e. $\lambda_0=\gamma_0=10$).
Very strong penalties suggest that the model does not contain the term $\beta$ and in both cases, the LASSO estimation suggest $\gamma=3/2$, therefore a model quite close to  Cox, Ingersoll and Ross (1980).
Being a shrinkage estimator, the LASSO estimates have very low standard error compared to the other cases.
As said, this application has been done to show the applicability of the LASSO method and we do not pretend to draw in depth conclusions from this empirical evidence which is out of our competence.

\section{Proofs}\label{sec4}

\begin{proof}[Proof of Theorem \ref{th1}]
 Following Fan and Li (2001), the existence of a consistent local minimizer is implied by that fact that for an arbitrarily small $\varepsilon > 0$, there exists a sufficiently large constant $C$, such that
\begin{equation}\label{prob}
\lim_{n\to \infty}P\left\{\inf_{z\in \mathbb{R}^{p+q}:|z|=C} \mathcal{F}(\theta_0+\varphi(n)^{1/2}z)> \mathcal{F}"(\theta_0)\right\}>1-\varepsilon,
\end{equation}
 with $z=(u,v)=(u_1,...,u_p,v_1,...,v_q)$. After some calculations, we obtain that
\begin{eqnarray*}
&&\mathcal{F}(\theta_0+\varphi(n)^{1/2}z)-\mathcal{F}(\theta_0)\\\\
&&=z\varphi(n)^{1/2}\ddot{\mathbb{H}}_n({\bf X}_n, \tilde\theta_n)\varphi(n)^{1/2}z'+2z\varphi(n)^{1/2}\ddot{\mathbb{H}}_n({\bf X}_n, \tilde\theta_n)\varphi(n)^{1/2}\varphi(n)^{-1/2}(\theta_0-\tilde\theta_n)'\\\\
&&\quad+n\Delta_n\left(\sum_{j=1}^p\lambda_{n,j}\left|\alpha_{0j}+\frac{u_j}{\sqrt{n\Delta_n}}\right|-\sum_{j=1}^p\lambda_{n,j}\left|\alpha_{0j}\right|\right)+n\left(\sum_{k=1}^q\gamma_{n,k}\left|\beta_{0k}+\frac{v_j}{\sqrt{n}}\right|-\sum_{j=1}^q\gamma_{n,k}\left|\beta_{0k}\right|\right)\\\\
&&=z\varphi(n)^{1/2}\ddot{\mathbb{H}}_n({\bf X}_n, \tilde\theta_n)\varphi(n)^{1/2}z'+2z\varphi(n)^{1/2}\ddot{\mathbb{H}}_n({\bf X}_n, \tilde\theta_n)\varphi(n)^{1/2}\varphi(n)^{-1/2}(\theta_0-\tilde\theta_n)'\\\\
&&\quad+n\Delta_n\left(\sum_{j=1}^p\lambda_{n,j}\left|\alpha_{0j}+\frac{u_j}{\sqrt{n\Delta_n}}\right|-\sum_{j=1}^{p_0}\lambda_{n,j}\left|\alpha_{0j}\right|\right)+n\left(\sum_{k=1}^q\gamma_{n,k}\left|\beta_{0k}+\frac{v_j}{\sqrt{n}}\right|-\sum_{j=1}^{q_0}\gamma_{n,k}\left|\beta_{0k}\right|\right)\\\\
&&\geq z\varphi(n)^{1/2}\ddot{\mathbb{H}}_n({\bf X}_n, \tilde\theta_n)\varphi(n)^{1/2}z'+2z\varphi(n)^{1/2}\ddot{\mathbb{H}}_n({\bf X}_n, \tilde\theta_n)\varphi(n)^{1/2}\varphi(n)^{-1/2}(\theta_0-\tilde\theta_n)'\\\\
&&\quad+n\Delta_n\sum_{j=1}^{p_0}\lambda_{n,j}\left(\left|\alpha_{0j}+\frac{u_j}{\sqrt{n\Delta_n}}\right|-\left|\alpha_{0j}\right|\right)+n\sum_{k=1}^{q_0}\gamma_{n,k}\left(\left|\beta_{0k}+\frac{v_j}{\sqrt{n}}\right|-\gamma_{n,k}\left|\beta_{0k}\right|\right)\\\\
&&\geq z\varphi(n)^{1/2}\ddot{\mathbb{H}}_n({\bf X}_n, \tilde\theta_n)\varphi(n)^{1/2}z'+2z\varphi(n)^{1/2}\ddot{\mathbb{H}}_n({\bf X}_n, \tilde\theta_n)\varphi(n)^{1/2}\varphi(n)^{-1/2}(\theta_0-\tilde\theta_n)'\\\\
&&\quad-\left[p_0(\sqrt{n\Delta_n}\mu_n)|u|+q_0(\sqrt{n}\nu_n)|v|\right]\\
&&=\Xi_1+\Xi_2-\Xi_3
\end{eqnarray*}
Now, it is clear that from the condition $\mathcal{C}_1$, one has that $\Xi_3=o_p(1)$. Furthermore, being $|z|=C$, $\Xi_1$ is uniformly larger than $\tau_{min}(\varphi(n)^{1/2}\ddot{\mathbb{H}}_n({\bf X}_n, \tilde\theta_n)\varphi(n)^{1/2})C^2$ and $$\tau_{min}(\varphi(n)^{1/2}\ddot{\mathbb{H}}_n({\bf X}_n, \tilde\theta_n)\varphi(n)^{1/2})C^2\stackrel{p}{\to}C^2\tau_{min}(\mathcal{I}(\theta_0))$$
where $\tau_{min}(A)$ is the minum eigenvalue of $A$.
We observe that 
$$|\varphi(n)^{1/2}\ddot{\mathbb{H}}_n({\bf X}_n, \tilde\theta_n)\varphi(n)^{1/2}\varphi(n)^{-1/2}(\theta_0-\tilde\theta_n)|=O_p(1)$$ 
and then $\Xi_2$ is bounded and linearly dependent on $C$.
Therefore, for $C$ sufficiently large, $\mathcal{F}(\theta_0+\varphi(n)^{1/2}z)-\mathcal{F}(\theta_0)$ dominates $\Xi_1+\Xi_2$ with arbitrarily large probability. This implies \eqref{prob} and the proof is completed by noticing that $\mathcal{F}(\theta)$ is striclty convex which implies that the local minimum is the global one.
\end{proof}

\begin{proof}[Proof of Theorem \ref{th2}] For $j=p_0+1,...,p$
$$\frac{1}{\sqrt{n\Delta_n}}\left.\frac{\partial \mathcal{F(\theta)}}{\partial \alpha_j}\right|_{\theta=\hat\theta_n}=2\frac{1}{n\Delta_n}\ddot{\mathbb{H}}_n^{(j)}({\bf X}_n, \tilde\theta_n)\sqrt{n\Delta_n}(\hat\theta_n-\tilde\theta_n)'+\frac{\lambda_{n,j}}{\sqrt{n\Delta_n}}{\rm sgn}(\hat\alpha_{n,j})$$
where $\ddot{\mathbb{H}}_n^{(j)}$ is the $j$-th row of $\ddot{\mathbb{H}}_n$.
The first term of the previous expression is $O_p(1)$, while $\frac{\lambda_{n,j}}{\sqrt{n\Delta_n}}\geq \frac{\kappa_n}{\sqrt{n\Delta_n}}\to \infty$. Since Theorem 1, $\hat\theta_n$ is a minimizer of $\mathcal{F}$, then necessarely, $P(\hat\alpha_{n,j}=0)\to 1$ (see Proof of Theorem 2, Wang and Leng, 2007). Similarly for the estimators of the coefficients $\beta_k,\,k=q_0+1,...,q $, we have that
$$\frac{1}{\sqrt{n}}\left.\frac{\partial \mathcal{F(\theta)}}{\partial \beta_k}\right|_{\theta=\hat\theta_n}=2\frac{1}{n}\ddot{\mathbb{H}}_n^{(k)}({\bf X}_n, \tilde\theta_n)\sqrt{n}(\hat\theta_n-\tilde\theta_n)'+\frac{\lambda_{n,j}}{\sqrt{n}}{\rm sgn}(\hat\beta_{n,j})$$
and by means the same arguments we get that $P(\hat\beta_{n,k}=0)\to 1$.
\end{proof}

\begin{proof}[Proof of Theorem \ref{th3}] Before starting the proof, it is necessary to introduce the following notations. Let 
\begin{itemize}
\item $\hat\Gamma_\alpha^{**}$ be the $p_0\times p_0$ matrix with elements $[\ddot{\mathbb{H}}_n]_{kj}$, $k,j=1,...,p_0$, 
\item $\hat\Gamma_\alpha^{*\circ}$ be the $p_0\times p-p_0$ matrix with elements $[\ddot{\mathbb{H}}_n]_{kj}$, $k=1,...,p_0$, $j=p_0+1,...,p$,
\item $\hat\Gamma_\alpha^{\circ\circ}$ be the $(p-p_0)\times (p-p_0)$ matrix with elements $[\ddot{\mathbb{H}}_n]_{kj}$, $k,j=p_0+1,...,p$,
\item $\hat\Gamma_\beta^{**}$ be the $p_0\times p_0$ matrix with elements $[\ddot{\mathbb{H}}_n]_{kj}$, $k,j=1,...,q_0$, 
\item $\hat\Gamma_\beta^{*\circ}$ be the $q_0\times q-q_0$ matrix with elements $[\ddot{\mathbb{H}}_n]_{kj}$, $k=1,...,q_0$, $j=q_0+1,...,q$,
\item $\hat\Gamma_\beta^{\circ\circ}$ be the $(q-q_0)\times (q-q_0)$ matrix with elements $[\ddot{\mathbb{H}}_n]_{kj}$, $k,j=q_0+1,...,q$,
\end{itemize}
where
$$
\frac{1}{n\Delta_n}
\left[\begin{array}{cc}\hat\Gamma_\alpha^{**}  & \hat\Gamma_\alpha^{*\circ}  \\
\hat\Gamma_\alpha^{*\circ}  & \hat\Gamma_\alpha^{\circ\circ} \end{array}\right]
 \stackrel{p}{\to}\Gamma_\alpha=
\left[\begin{array}{cc}\Gamma_\alpha^{**}  & \Gamma_\alpha^{*\circ}  \\
\Gamma_\alpha^{*\circ}  & \Gamma_\alpha^{\circ\circ} \end{array}\right]
$$
with
\begin{itemize}
\item $\Gamma_\alpha^{**}=[\mathcal I_b^{kj}(\alpha_0^*)]_{k,j}$, where $k,j=1,\ldots,p_0$,
\item $\Gamma_\alpha^{*\circ}=[\mathcal I_b^{kj}(\alpha_0^*)]_{k,j}$, where $k=1, \ldots,p_0; j=p_0+1,\ldots,p$,
\item $\Gamma_\alpha^{\circ\circ}=[\mathcal I_b^{kj}(\alpha_0^*)]_{k,j}$, where $k,j=p_0+1,\ldots,p$,
\end{itemize}
and
$$
\frac{1}{n}
\left[\begin{array}{cc}\hat\Gamma_\beta^{**}  & \hat\Gamma_\beta^{*\circ}  \\
\hat\Gamma_\beta^{*\circ}  & \hat\Gamma_\beta^{\circ\circ} \end{array}\right]
 \stackrel{p}{\to}\Gamma_\beta=
\left[\begin{array}{cc}\Gamma_\beta^{**}  & \Gamma_\beta^{*\circ}  \\
\Gamma_\beta^{*\circ}  & \Gamma_\beta^{\circ\circ} \end{array}\right]
$$
with
\begin{itemize}
\item $\Gamma_\beta^{**}=[\mathcal I_\sigma^{kj}(\beta_0^*)]_{k,j}$, where $k, j=1,\ldots,q_0$,
\item $\Gamma_\beta^{*\circ}=[\mathcal I_\sigma^{kj}(\beta_0^*)]_{k,j}$, where $k=1, \ldots,q_0; j=q_0+1,\ldots,q$,
\item $\Gamma_\beta^{\circ\circ}=[\mathcal I_\sigma^{kj}(\beta_0^*)]_{k,j}$, where $k,j=q_0+1,\ldots,q$.
\end{itemize}

From Theorem \ref{th2} follows that the estimator $\hat\theta_n$ globally minimizes of the following objective function
\begin{eqnarray*}
\mathcal{F}_0(\theta)&=&(\alpha^*-\tilde\alpha_n^*)\hat\Gamma_\alpha^{**}(\alpha^*-\tilde\alpha_n^*)'-2(\alpha^*-\tilde\alpha_n^*)\hat\Gamma_\alpha^{*\circ} \,(\tilde\alpha_n^{\circ})'+\tilde\alpha_n^\circ\,\hat\Gamma_\alpha^{\circ\circ}\,(\tilde\alpha_n^\circ)'+\sum_{j=1}^{p_0}\lambda_{n,j}|\alpha_j|\\
&&+(\beta^*-\tilde\beta_n^*)\hat\Gamma_\beta^{**}(\beta^*-\tilde\beta_n^*)'-2(\beta^*-\tilde\beta_n^*)\hat\Gamma_\beta^{*\circ}\,(\tilde\beta_n^\circ)'+\tilde\beta_n^\circ\,\hat\Gamma_\beta^{\circ\circ}\,(\tilde\beta_n^\circ)'+\sum_{k=1}^{q_0}\gamma_{n,k}|\beta_k|
\end{eqnarray*}
Hence, the following normal equations hold
 \begin{eqnarray}
 \label{eq:A4}
 0=\frac12 \left.\frac{\partial\mathcal{F}_0(\theta)}{\partial\alpha^*}\right|_{\alpha^*=\hat\alpha_n^*}=\hat\Gamma_\alpha^{**}(\hat\alpha_n^*-\tilde\alpha_n^*)'-\hat\Gamma_\alpha^{*\circ}\,(\tilde\alpha_n^\circ)'+A(\hat\alpha_n^*)
\end{eqnarray}
 \begin{eqnarray}
 \label{eq:A4A}
 0=\frac12 \left.\frac{\partial\mathcal{F}_0(\theta)}{\partial\beta^*}\right|_{\beta^*=\hat\beta_n^*}=\hat\Gamma_\beta^{**}(\hat\beta_n^*-\tilde\beta_n^*)'-\hat\Gamma_\beta^{*\circ}\,(\tilde\beta_n^\circ)'+B(\hat\beta_n^*)
\end{eqnarray}
where $A(\hat\alpha_n^*)$ and $B(\hat\beta_n^*)$ are respectively $p_0$ and $q_0$ vectors with $j$-th and $k$-th component given by $\frac12 \lambda_{n,j}{\rm sgn}(\hat\alpha_{n,j}^*)$ and $\frac12 \gamma_{n,k}{\rm sgn}(\hat\beta_{n,j}^*)$. From \eqref{eq:A4}, by simple calculations, we have that
 \begin{eqnarray*}
 \sqrt{n \Delta_n}(\hat\alpha^*_n - \alpha_0^*)&=&
 \sqrt{n \Delta_n}(\tilde\alpha_n^*-\alpha_0^*)+\left(\frac{1}{n \Delta_n}\hat\Gamma_\alpha^{**}\right)^{-1}\frac{1}{n \Delta_n}\hat\Gamma_\alpha^{*\circ} \sqrt{n \Delta_n}\tilde\alpha_n^\circ - (\hat\Gamma_\alpha^{**})^{-1}\sqrt{n \Delta_n}A(\hat\alpha_n^*)\\
&=& \sqrt{n \Delta_n}(\tilde\alpha_n^*-\alpha_0^*)+(\Gamma_\alpha^{**})^{-1}\Gamma_\alpha^{*\circ} \sqrt{n \Delta_n}\tilde\alpha_n^\circ +o_p(1)  
\end{eqnarray*}
being  $\sqrt{n \Delta_n}A(\hat\alpha_n^*)=o_p(1)$ by condition $\mathcal{C}_1$. Furthermore, by inverting the block matrix $\Gamma_\alpha$, we obtain that
$$\Gamma_\alpha^{-1}=
\left(\begin{array}{cc}(\Gamma_\alpha^{**})^{-1} &-(\Gamma_\alpha^{**})^{-1}\Gamma_\alpha^{*\circ}(\Gamma_\alpha^{\circ\circ})^{-1} \\\\
-(\Gamma_\alpha^{**})^{-1}\Gamma_\alpha^{*\circ}(\Gamma_\alpha^{\circ\circ})^{-1} \quad & (\Gamma_\alpha^{\circ\circ})^{-1} +(\Gamma_\alpha^{\circ\circ})^{-1} \Gamma_\alpha^{*\circ}(\Gamma_\alpha^{**})^{-1}\Gamma_\alpha^{*\circ}(\Gamma_\alpha^{\circ\circ})^{-1}
   \end{array}\right)
$$
where $(\Gamma_\alpha^{**})^{-1}=(\Gamma_\alpha^{**}-\Gamma_\alpha^{*\circ}(\Gamma_\alpha^{\circ\circ})^{-1}\Gamma_\alpha^{*\circ})^{-1}$
and then
$$(\Gamma_\alpha^{**})^{-1}\Gamma_\alpha^{*\circ}=(\Gamma_\alpha^{*\circ})^{-1}\Gamma_\alpha^{\circ\circ}.
$$
By condition $\mathcal{B}_2$ and the properties of the conditional multivariate Gaussian distribution, we derive that
$$\sqrt{n \Delta_n}(\tilde\alpha_n^*-\alpha_0^*)\stackrel{d}{\to} N(0, (\Gamma_\alpha^{**})^{-1} - (\Gamma_\alpha^{*\circ})^{-1}\Gamma_\alpha^{\circ\circ}(\Gamma_\alpha^{*\circ})^{-1})
$$
and
$$ (\Gamma_\alpha^{*\circ})^{-1}\Gamma_\alpha^{\circ\circ} \sqrt{n \Delta_n}\tilde\alpha_n^\circ \stackrel{d}{\to} N(0,  (\Gamma_\alpha^{*\circ})^{-1}\Gamma_\alpha^{\circ\circ}(\Gamma_\alpha^{*\circ})^{-1}).
$$
Thus $ \sqrt{n \Delta_n}(\hat\alpha^*_n - \alpha_0^*)$ converges to $N(0, (\Gamma_\alpha^{**})^{-1})$.
Similarly, from \eqref{eq:A4A} we obtain that
$$
 \sqrt{n}(\hat\beta^*_n - \beta_0^*) =
 \sqrt{n}(\tilde\beta_n^*-\beta_0^*)+(\Gamma_\beta^{**})^{-1}\Gamma_\beta^{*\circ} \sqrt{n}\tilde\beta_n^\circ +o_p(1)  
$$
with $\sqrt{n} B(\hat\beta_n^*)=o_p(1)$. Therefore,
$ \sqrt{n }(\hat\beta^*_n - \beta_0^*)$ converges to $N(0, (\Gamma_\beta^{**})^{-1})$. This concludes the proof.
\end{proof}

\begin{table}[t]
\begin{tabular}{l l ccc}
Reference & Model & $\alpha$ & $\beta$ & $\gamma$\\
\hline
Merton (1973) & $\de X_t = \alpha \de t + \sigma  \de W_t $& & 0 & 0\\
Vasicek (1977) & $\de X_t = (\alpha + \beta X_t)\de t + \sigma \de W_t$ & && 0\\
Cox, Ingersoll and Ross (1985) & $\de X_t = (\alpha + \beta X_t)\de t + \sigma \sqrt{X_t} \de W_t$&  & &$1/2$\\ 
Dothan (1978) & $\de X_t = \sigma X_t \de W_t $ & 0 & 0 &1\\
Geometric Brownian Motion &  $\de X_t = \beta X_t \de t + \sigma  X_t \de W_t $& 0&  & 1\\
Brennan and Schwartz (1980) & $\de X_t = (\alpha + \beta X_t)\de t + \sigma X_t \de W_t$ & &&1\\
Cox, Ingersoll and Ross (1980) & $\de X_t =  \sigma X_t^{3/2} \de W_t$&  0& 0&$3/2$\\ 
Constant Elasticity Variance & $\de X_t = \beta X_t\de t + \sigma X_t^\gamma \de W_t $ &0 &&\\
CKLS (1992) & $\de X_t = (\alpha + \beta X_t)\de t + \sigma X_t^\gamma \de W_t $
\end{tabular}
\caption{The family of one-factor short term interest rates models seen as special cases of the general CKLS model.}
\label{tab1}
\end{table}

\begin{table}
\begin{tabular}{l l cccc}
Model & Estimation Method & $\alpha$ & $\beta$ & $\sigma$ & $\gamma$\\
\hline
Vasicek & MLE & 4.1889 & -0.6072 & 0.8096 & -- \\
\\
CKLS & Nowman & 2.4272 & -0.3277 & 0.1741 & 1.3610\\
\\
CKLS & Exact Gaussian & 2.0069 & -0.3330  & 0.1741& 1.3610\\
&& (0.5216) & (0.0677)&&\\
\\
CKLS & QMLE &  2.0822 &-0.2756 &    0.1322 & 1.4392 \\
&&(0.9635) &  (0.1895)  & (0.0253)&  (0.1018)\\
\\
CKLS & QMLE + LASSO & 1.5435 &-0.1687  &0.1306& 1.4452\\
& with mild penalization& (0.6813) & (0.1340)  & (0.0179)&(0.0720)\\
\\
CKLS & QMLE + LASSO & 0.5412 &0.0001  &0.1178& 1.4944\\
&with strong penalization& (0.2076) & (0.0054)  & (0.0179)&(0.0720)
\end{tabular}
\caption{Model selection on the CKLS model for the U.S. interest rates data. Table taken from Yu and Phillips (2001) and updated with LASSO results. Standard errors in parenthesis when available.}
\label{tab2}
\end{table}

\begin{figure}
\includegraphics[width=\textwidth]{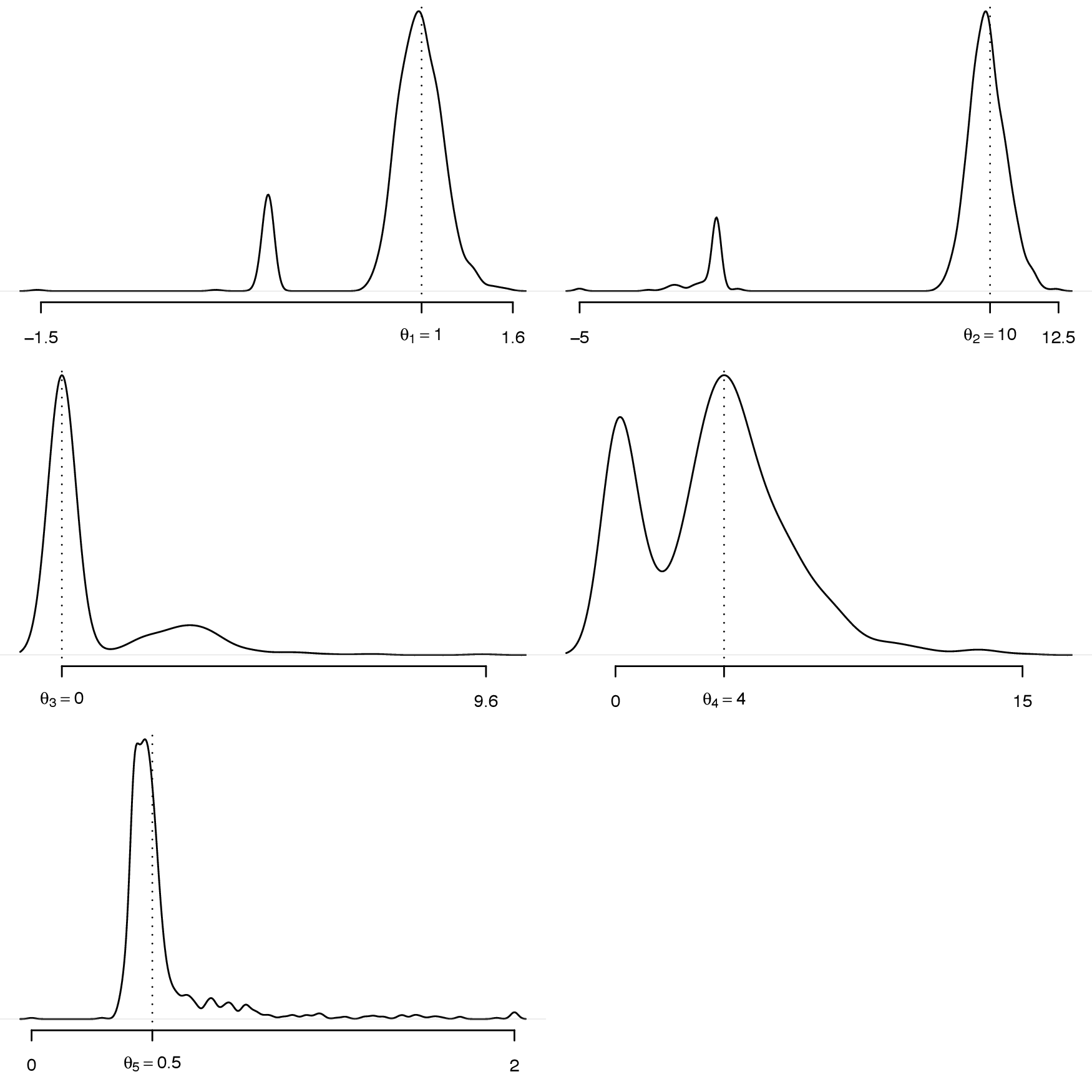}
\caption{Density estimation of the LASSO-type estimates of the parameters of the process $\de X_t = -\theta_1(X_t-\theta_2)\de t + (\theta_3+\theta_4 X_t)^{\theta_5}\de W_t$ over 1000 Monte Carlo replications. True values 
$(\theta_1=1, \theta_2=10, \theta_3=0, \theta_4=4, \theta_5=0.5)$ represented as vertical dotted lines.}
\label{fig1}
\end{figure}

\end{document}